\newcommand\CA{{\mathcal A}}
\newcommand\CO{{\mathcal O}}
\newcommand\fg{{\mathfrak g}}
\newcommand\fp{{\mathfrak p}}
\newcommand\fl{{\mathfrak l}}
\newcommand\ft{{\mathfrak t}}
\newcommand\fz{{\mathfrak z}}
\newcommand\BBC{{\mathbb C}}
\newcommand\Ad{{\operatorname{Ad}}}
\newcommand\coexp{\operatorname{coexp}}
\newcommand\Fix{{\operatorname{Fix}}}
\newcommand\GL{\operatorname{GL}}
\newcommand\inverse{^{-1}}
\renewcommand\th{{^{\text{th}}}}
\renewcommand\r{{\operatorname{ref}}}
\newcommand\reg{{\operatorname{reg}}}
\numberwithin{equation}{section}
\theoremstyle{plain}
\newtheorem{lemma}[equation]{Lemma}
\newtheorem{theorem}[equation]{Theorem}
\newtheorem{proposition}[equation]{Proposition}
\subjclass[2010]{Primary 20F55, 52B30; Secondary 13A50}
\begin{document}

\title[Invariants, arrangements, and decomposition classes]
{Invariants of reflection groups, arrangements, and
  normality of decomposition classes in Lie algebras}


\author[J.M. Douglass]{J. Matthew Douglass} \address{Department of
  Mathematics\\ University of North Texas\\ Denton TX, USA 76203}
\email{douglass@unt.edu} \urladdr{http://hilbert.math.unt.edu}

\author[G. R\"ohrle]{Gerhard R\"ohrle}
\address
{Fakult\"at f\"ur Mathematik,
Ruhr-Universit\"at Bochum,
D-44780 Bochum, Germany}
\email{gerhard.roehrle@rub.de}
\urladdr{http://www.ruhr-uni-bochum.de/ffm/Lehrstuehle/Lehrstuhl-VI/rubroehrle.html}

\keywords{Arrangements, Coxeter groups, decomposition classes, invariants}

\maketitle
\allowdisplaybreaks

\begin{abstract}
Suppose that $W$ is a finite, unitary, reflection group acting on the complex vector space $V$ and $X$ is a subspace of $V$. Define $N$ to be the setwise stabilizer of $X$ in $W$, $Z$ to be the pointwise stabilizer, and $C=N/Z$. Then restriction defines a homomorphism from the algebra of $W$-invariant polynomial functions on $V$ to the algebra of $C$-invariant functions on $X$. In this note we consider the special case when $W$ is a Coxeter group, $V$ is the complexified reflection representation of $W$, and $X$ is in the lattice of the arrangement of $W$, and give a simple, combinatorial characterization of when the restriction mapping is surjective in terms of the exponents of $W$ and $C$. As an application of our result, in the case when $W$ is the Weyl group of a semisimple, complex, Lie algebra, we complete a calculation begun by Richardson in 1987 and obtain a simple combinatorial characterization of regular decomposition classes whose closure is a normal variety. 
\end{abstract}


\section{Introduction}

Suppose that $W$ is a finite, complex reflection group acting on the complex
vector space $V=\BBC^l$ and $X$ is a subspace of $V$. Define $N_X=\{\, w\in
W\mid w(X)=X\,\}$, the setwise stabilizer of $X$ in $W$ and $Z_X=\{\, w\in
W\mid w(x)=x\, \forall x\in X\,\}$, the pointwise stabilizer of $X$ in
$V$. Then $Z_X$ is a normal subgroup of $N_X$ and we set $C_X=N_X/Z_X$. It
is easy to see that restriction defines a homomorphism from the algebra of
$W$-invariant polynomial functions on $V$ to the algebra of $C_X$-invariant
functions on $X$, say $\rho\colon \BBC[V]^W \to \BBC[X]^{C_X}$. In this note
we consider the special case when $W$ is a Coxeter group, $V$ is the
complexified reflection representation of $W$, and $X$ is in the lattice of
the arrangement of $W$. Our main result is a simple combinatorial
characterization in terms of the exponents of $W$ and $C_X$ of when the
map $\rho$ is surjective.

As an application, our main result combined with a theorem of Richardson
\cite{richardson:normality} leads immediately to a complete, and easily
computable, classification of the regular decomposition classes in a
complex, semisimple Lie algebra whose closure is a normal variety.
 
\section{Statement of the main results}

By a \emph{hyperplane arrangement} we mean a pair $(V, \CA)$, where $\CA$ is
a finite set of hyperplanes in $V$. The arrangement of a subgroup
$C\subseteq \GL(V)$ consists of the reflecting hyperplanes of the elements
in $C$ that act on $V$ as reflections. We denote the arrangement of $C$ in
$V$ by $\CA(V,C)$. Define $C^\r$ to be the subgroup generated by the
reflections in $C$. Then obviously $\CA(V, C) = \CA(V, C^\r)$.

For general information about arrangements and reflection groups we refer
the reader to \cite{orlikterao:arrangements} and \cite{bourbaki:groupes}.

Suppose from now on that $W$ is a finite subgroup of $\GL(V)$ generated by
reflections. Unless otherwise specified, we allow the case when the
generators of $W$ are ``pseudo-reflections,'' that is, elements in $\GL(V)$
with finite order whose $1$-eigenspace is a hyperplane in $V$. For a
subspace $X$ of $V$ we have two natural hyperplane arrangements in $X$:
\begin{itemize}
\item The restricted arrangement $\CA(V,W)^X$ consisting of intersections
  $H\cap X$ for $H$ in $\CA(V,W)$ with $X\not\subseteq H$.
\item The reflection arrangement $\CA(X, C_X) =\CA(X, C_X^\r)$ consisting of
  the reflecting hyperplanes of elements in $C_X$ that act on $X$ as
  reflections.
\end{itemize}

For a free hyperplane arrangement $\CA$ we denote the multiset of exponents
of $\CA$ by $\exp(\CA)$. Terao \cite{terao:freeI} has shown that reflection
arrangements are free and that $\exp(\CA(V,W))= \coexp(W)$, where
$\coexp(W)$ denotes the multiset of coexponents of $W$.

The lattice of a hyperplane arrangement is the set of subspaces of $V$ of
the form $H_1\cap \dotsm \cap H_n$ where $\{ H_1, \dots, H_n \}$ is a subset
of $\CA$. It is known that $\CA(V, W)^X$ is free when $W$ is a Coxeter group
and $X$ is a subspace in the lattice of $\CA(V, W)$ (see
\cite{orlikterao:free}, \cite{douglass:adjoint}). Thus, in this case, we
have that (1) $\exp\left( \CA(X, C_X) \right)$, $\exp\left( \CA(V, W)^X
\right)$, and $\exp\left( \CA(V, W) \right)$ are all defined; (2)
$\exp\left( \CA(X, C_X) \right)= \exp(C_X^\r)$; and (3) $\exp\left( \CA(V,
  W) \right) =\exp(W)$.

We can now state our main result.

\begin{theorem}\label{main}
  Suppose $W$ is a finite Coxeter group, $V$ affords the reflection
  representation of $W$, and $X$ is in the lattice of the arrangement
  $\CA(V,W)$. Then the restriction mapping $\rho\colon \BBC[V]^W \to
  \BBC[X]^{C_X}$ is surjective if and only if
  \[
  \exp\left( \CA(X, C_X) \right) = \exp\left( \CA(V, W)^X \right)
  \subseteq \exp\left( \CA(V, W) \right).
  \]
\end{theorem}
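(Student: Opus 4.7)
The plan is to analyze the surjectivity of $\rho$ by comparing the arrangements $\CA(X, C_X)$ and $\CA(V, W)^X$ on $X$, using the equivalence: $\rho$ is surjective if and only if the finite birational morphism $X/C_X \to \pi(X) \subseteq V/W$ is an isomorphism, where $\pi\colon V \to V/W$ is the quotient --- equivalently, $\pi(X)$ is a normal subvariety of $V/W$.

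As a preliminary, I would establish the inclusion $\CA(X, C_X) \subseteq \CA(V, W)^X$. Given a reflection $r \in C_X$ with fixed hyperplane $K \subset X$, choose a lift $\widetilde r \in N_X$. Then $\widetilde r$ fixes $K$ pointwise, so lies in $Z_K$, which by Steinberg's theorem is the parabolic subgroup of $W$ generated by the reflections whose hyperplanes contain $K$. Since $r \ne 1$, $\widetilde r \notin Z_X$, so in a factorization of $\widetilde r$ into such reflections at least one, $s_H$, satisfies $H \not\supseteq X$; then $H \cap X$ is a hyperplane of $X$ containing $K$, forcing $H \cap X = K \in \CA(V, W)^X$. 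Since both $\CA(X, C_X)$ and $\CA(V, W)^X$ are free arrangements on $X$, and any free arrangement $\CA$ satisfies $|\CA| = \sum \exp(\CA)$, condition (a) is equivalent to the arrangement equality $\CA(X, C_X) = \CA(V, W)^X$.

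For $(\Leftarrow)$, assuming (a) and (b), I would use (b) to select basic $W$-invariants $f_{j_1}, \dots, f_{j_k}$ whose degrees match those of basic invariants of $C_X^{\r}$, and show the restrictions $\rho(f_{j_i})$ generate $\BBC[X]^{C_X^{\r}}$. The Jacobian of basic $W$-invariants is, up to scalar, a defining polynomial for $\CA(V, W)$; restricting an appropriate $k\times k$ minor to $X$ and using the arrangement equality from (a) produces a nonzero scalar multiple of a defining polynomial for $\CA(X, C_X^{\r})$, yielding algebraic independence and surjectivity onto $\BBC[X]^{C_X^{\r}}$. Since $\rho(\BBC[V]^W) \subseteq \BBC[X]^{C_X} \subseteq \BBC[X]^{C_X^{\r}}$, this forces both inclusions to be equalities; in particular $C_X = C_X^{\r}$, and $\rho$ is surjective.

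For $(\Rightarrow)$, assume $\rho$ is surjective. Then $\pi(X) = X/C_X$ as schemes, and matching the codim-one ramification loci --- that of $\pi|_X \colon X \to \pi(X)$, which equals $\CA(V, W)^X$, with that of the quotient $X \to X/C_X$, which equals $\CA(X, C_X)$ --- gives (a). For (b), one observes that $\BBC[X]^{C_X}$ is generated by $l$ homogeneous restrictions of degrees $a_i + 1$, and is a free graded module of rank $[C_X : C_X^{\r}]$ over the polynomial ring $\BBC[X]^{C_X^{\r}}$ of degrees $c_i + 1$; a Hilbert series comparison with $\BBC[V]^W = \BBC[f_1, \dots, f_l]$ forces $C_X = C_X^{\r}$ and $\{c_i\} \subseteq \{a_j\}$ as multisets, giving (b). The principal obstacle is this last step: ruling out a genuine non-reflection quotient $C_X$ admitting a surjection from $\BBC[V]^W$, and thereby forcing $C_X = C_X^{\r}$ from surjectivity alone, requires careful graded-module analysis rather than a mere dimension count.
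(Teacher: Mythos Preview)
Your outline for $(\Rightarrow)$ is close in spirit to the paper's Proposition~\ref{surj}: once $C_X$ is known to act as a reflection group, a Richardson-type choice of basic invariants gives $\exp(C_X)\subseteq\exp(W)$, and a Denef--Loeser Jacobian argument gives $\CA(C_X)=\CA^X$. You correctly flag the real obstacle, namely forcing $C_X=C_X^{\r}$ from surjectivity alone, but you do not resolve it. The ``Hilbert series comparison'' you propose does not obviously work: $\BBC[X]^{C_X}$ can be a polynomial ring (so that $\rho$ could in principle be surjective) without $C_X$ being generated by reflections, and there is no general graded-module identity that rules this out. The paper handles this step (Proposition~\ref{notref}) by a case-by-case dimension count: using Howlett's classification of $C_I$ and $C_I^{\r}$, one locates for each offending pair a specific even degree~$d$ with $\dim\BBC[V]^W_d<\dim\BBC[X_I]^{C_I}_d$. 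This is not conceptual, but no uniform argument is given or apparently known.

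The more serious gap is in $(\Leftarrow)$. Your plan is: pick basic $W$-invariants $f_{j_1},\dots,f_{j_k}$ of the degrees dictated by~(b), form the relevant $k\times k$ Jacobian minor, restrict to~$X$, and conclude it is a \emph{nonzero} multiple of the defining polynomial of $\CA(C_X^{\r})$. The Denef--Loeser argument indeed shows that this restricted minor vanishes on every $K\in\CA^X$, and the degree matches $|\CA^X|=|\CA(C_X)|$ by~(a); so \emph{if} nonzero it is the desired polynomial. But nothing in (a) or (b) guarantees nonvanishing. The condition $\exp(\CA(C_X))=\exp(\CA^X)\subseteq\exp(\CA)$ is purely numerical and does not by itself prevent the chosen restrictions $\rho(f_{j_i})$ from being algebraically dependent. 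This is precisely the point at which the paper abandons a uniform argument: after classifying all $(W,W_I)$ satisfying the exponent conditions (Tables~\ref{classical} and~\ref{exceptional}), it verifies surjectivity of~$\rho$ explicitly --- by elementary symmetric-function calculations in the classical types, by a short direct argument when $W_I$ is maximal, and by a GAP computation of the Jacobian determinant in the six remaining exceptional cases. If your Jacobian step could be made uniform it would supersede all of this; as written, it assumes exactly what has to be checked.
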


To simplify the notation, in the rest of this paper we denote the
arrangements $\CA(X, C_X)$, $\CA(V, W)^X$, and $\CA(V, W)$ by $\CA(C_X)$,
$\CA^X$, and $\CA$ respectively.

In the next section, using a modification of an argument of Denef and Loeser
\cite{denefloeser:regular}, we show in Proposition \ref{surj} that if $W$ is
any complex reflection group, $X$ is in the lattice of $\CA$, $C_X= C_X^\r$,
and $\rho$ is surjective, then $\CA(C_X) = \CA^X$ and $\exp(C_X) \subseteq
\exp(W)$. It then follows that in this case $\CA^X$ is a free arrangement,
$\exp(A(C_X)) = \exp(\CA^X)$, and $\exp(C_X) \subseteq \exp(W)$. In
particular, the forward implication in the theorem holds whenever $C_X$ acts
on $X$ as a reflection group.

In \S\ref{proof} we complete the proof of Theorem \ref{main} by (1) showing
in Proposition \ref{notref} that if $W$ is a Coxeter group and $C_X$ does
not act on $X$ as a reflection group, then $\rho$ is not surjective and (2)
computing all cases in which $\exp( \CA(C_X) ) = \exp( \CA^X) \subseteq
\exp( \CA)$ for a Coxeter group $W$ and showing that $\rho$ is surjective in
these cases.

Notice that the conditions $\exp( \CA(C_X) ) = \exp( \CA^X) \subseteq \exp(
\CA)$ are not that easy to satisfy. In case $W$ is a Coxeter group of type
$A_{r-1}$, up to the action of $W$, the subspaces $X$ in the lattice of
$\CA$ are parametrized by partitions of $r$. The conditions $\exp( \CA(C_X)
) = \exp( \CA^X) \subseteq \exp( \CA)$ hold if and only if the corresponding
partition of $r$ has equal parts. For $W$ a Coxeter group of type $E_8$, up
to the action of $W$, there are forty-one possibilities for $X$, eight of
which have the property that $\exp( \CA(C_X) ) = \exp( \CA^X) \subseteq
\exp( \CA)$. All cases in which $\exp( \CA(C_X) ) = \exp( \CA^X) \subseteq
\exp( \CA)$ when $W$ is a finite, irreducible Coxeter group are listed in
Tables \ref{classical} and \ref{exceptional} in \S\ref{proof}.

In the rest of this section we explain how our main result leads to a
characterization of regular decomposition classes in a complex, semisimple
Lie algebra whose closure is a normal variety. The classification of these
decomposition classes was completed, case-by-case, for classical Weyl groups
by Richardson in 1987 \cite{richardson:normality} and extended by Broer in
1998 \cite{broer:decomposition}, again using case-by-case arguments, to
exceptional Weyl groups.

Suppose that $\fg$ is a semisimple, complex Lie algebra and $G$ is the
adjoint group of $\fg$. Motivated by a question of De Concini and Procesi
about the normality of the closure of the $G$-saturation of a Cartan
subspace for an involution of $\fg$, Richardson proved the following.

\begin{theorem}[{\cite[Theorem B]{richardson:normality}}]\label{thmB}
  Suppose that $\ft$ is a Cartan subalgebra of $\fg$, $W$ is the Weyl group
  of $(\fg, \ft)$, and $X$ is a subspace of $\ft$ with the property that
  $C_X$ acts on $X$ as a reflection group. Let $Y$ denote the closure of the
  set of elements in $\fg$ whose semisimple part is in $\Ad(G) X$. Then $Y$
  is a normal, Cohen-Macaulay variety if and only if $\rho \colon
  \BBC[\ft]^W \to \BBC[X]^{C_X}$ is surjective.
\end{theorem}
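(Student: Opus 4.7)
The plan is to realize $Y$ as the image of a proper parabolic induction map and, via Chevalley restriction, translate the normality of $Y$ into a statement about the image of $\rho$.

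First, I would set up the Levi data. Since $X$ lies in the lattice of $\CA(V,W)$, the centralizer $\fl=\fz_\fg(X)$ is a Levi subalgebra with $\fz(\fl)=X$ and Weyl group $Z_X$, so $C_X=N_X/Z_X$ is the relative Weyl group of the pair $(\fg,X)$. Let $L\subseteq G$ be the Levi subgroup with $\Lie(L)=\fl$, fix a parabolic $P=LU$, set $\fu=\Lie(U)$, and let $\CM$ denote the nilpotent cone of $\fl$.

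Next, I would realize $Y$ as the image of the generalized Grothendieck--Springer map
\begin{equation*}
\pi\colon G\times^{P}(X+\CM+\fu)\longrightarrow\fg.
\end{equation*}
This map is proper (as $G/P$ is projective), its image is exactly $Y$, and over the open stratum $\Ad(G)X^{\reg}\subseteq Y$ it is generically finite with fibres parametrized by $C_X$.

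The crux of the argument lies in computing the $G$-invariants on $Y$ and on its normalization $\widetilde Y$. By the Chevalley restriction theorem $\BBC[\fg]^{G}\cong\BBC[\ft]^{W}$, the natural homomorphism $\BBC[\fg]^{G}\to\BBC[Y]^{G}$ factors as restriction to $\ft$ followed by restriction to $X$, landing in $\BBC[X]^{C_X}$ by $C_X$-equivariance, and one obtains
\begin{equation*}
\BBC[Y]^{G}\;\cong\;\rho(\BBC[\ft]^{W})\;\subseteq\;\BBC[X]^{C_X}.
\end{equation*}
On the other hand, a Luna slice argument at a point of $X^{\reg}$, combined with the hypothesis that $C_X$ acts on $X$ as a reflection group (so $\BBC[X]^{C_X}$ is a polynomial algebra and $X\to X/C_X$ is flat), identifies
\begin{equation*}
\BBC[\widetilde Y]^{G}\;\cong\;\BBC[X]^{C_X}.
\end{equation*}
A standard comparison argument for the finite, birational, $G$-equivariant map $\widetilde Y\to Y$, passing through the induced map on GIT quotients $X/C_X=\widetilde Y/\!/G\to Y/\!/G$, then shows that $Y$ is normal if and only if the inclusion $\rho(\BBC[\ft]^{W})\subseteq\BBC[X]^{C_X}$ is an equality, i.e.\ if and only if $\rho$ is surjective. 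Cohen--Macaulayness of $Y$ in that case follows from rational-singularities arguments applied to $\pi$ (or, equivalently, from Boutot's theorem applied to $Y\to Y/\!/G\cong X/C_X$, whose target is a polynomial ring).

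The main obstacle is the identification $\BBC[\widetilde Y]^{G}\cong\BBC[X]^{C_X}$. It requires a careful Luna slice analysis at a regular semisimple point, where the reflection-group hypothesis on $C_X$ is essential to match the étale-local structure of $\widetilde Y\to Y/\!/G$ with the quotient $X\to X/C_X$, together with a codimension-one argument along $X$ to propagate the local identification to the whole of $\widetilde Y$.
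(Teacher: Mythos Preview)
This theorem is not proved in the present paper; it is quoted from Richardson \cite{richardson:normality} and invoked as a black box in order to derive the application to regular decomposition classes. There is therefore no proof here against which to compare your proposal.

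For what it is worth, your outline is broadly in the spirit of Richardson's argument: one shows, via Chevalley restriction, that $\BBC[Y]^G$ is the image of $\rho$, identifies the invariants on the normalization with $\BBC[X]^{C_X}$ (this is where the reflection-group hypothesis, guaranteeing that $\BBC[X]^{C_X}$ is polynomial, is used), and then reads off normality from the comparison. One small point: the statement as recorded in this paper does not assume that $X$ lies in the lattice of $\CA(\ft,W)$, so your opening clause ``Since $X$ lies in the lattice\ldots'' imports a hypothesis not formally present. For the application made in this paper $X$ is always the centre of a Levi and hence automatically in the lattice, so the issue is harmless here; but a faithful write-up of Richardson's Theorem~B would have to address the general case or reduce to it. To check the details of the slice argument and the Cohen--Macaulay assertion you should consult \cite{richardson:normality} directly.
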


When $V=\ft$ is a Cartan subalgebra of $\fg$, a subspace $X$ of $\ft$ is in
the lattice of $\CA(\ft, W)$ if and only if there is a parabolic subalgebra
$\fp$ of $\fg$ and a Levi subalgebra $\fl$ of $\fp$ with $\ft\subseteq \fl$
so that $X=\fz$ is the center of $\fl$.

Now let $\fg_\reg$ denote the set of regular elements in $\fg$. Then
$\fg_\reg$ is the disjoint union of decomposition classes of $\fg$ (see
\cite[\S3]{borho:sheets}). A decomposition class contained in $\fg_\reg$ is
a \emph{regular decomposition class.} Suppose that $\fl$ and $\fz$ are as in
the last paragraph, $\fz_0$ is the subspace of elements in $\fz$ whose
centralizer in $\fg$ is $\fl$, and $\CO$ is the regular, nilpotent, adjoint
orbit in $\fl$. Then $\Ad(G)( \fz_0 + \CO)$ is a regular decomposition
class. Moreover, every regular decomposition class is of this form for some
$\fl$ \cite[\S3]{borho:sheets}. Therefore, combining Theorems \ref{main} and
\ref{thmB}, we obtain the following characterization of regular decomposition
classes in $\fg$ that have normal closure.

\begin{theorem}
  With the notation above, suppose that $D=\Ad(G)( \fz_0 + \CO)$ is a
  regular decomposition class in $\fg$. Then $\overline{D}$ is a normal
  variety if and only if 
  \[
  \exp( \CA (\fz, C_\fz))= \exp(\CA(\ft, W)^{\fz}) \subseteq \exp( \CA(\ft,
  W)).
  \]
\end{theorem}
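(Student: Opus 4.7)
The plan is to derive the theorem by combining Richardson's Theorem \ref{thmB} with our Theorem \ref{main}, applied with $V = \ft$ and $X = \fz$. The paragraph preceding Theorem \ref{thmB} already records that $\fz$ lies in the lattice of $\CA(\ft, W)$, being the intersection of the reflecting hyperplanes of those roots that vanish on it, so Theorem \ref{main} applies to this choice of $X$.

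The step that is not purely formal is the identification of $\overline{D}$ with the variety $Y$ of Theorem \ref{thmB}, defined as the closure of the set of $y \in \fg$ whose semisimple part lies in $\Ad(G)\fz$. The inclusion $\overline{D} \subseteq Y$ is immediate from the Jordan decomposition: any element of $D = \Ad(G)(\fz_0 + \CO)$ has the form $\Ad(g)(z + n)$ with $z \in \fz_0 \subseteq \fz$ commuting with $n \in \CO$, so its semisimple part is $\Ad(g)z \in \Ad(G)\fz$. For the reverse inclusion I would invoke Borho's description of sheets in \cite{borho:sheets}: the set of elements whose semisimple part is $G$-conjugate into $\fz$ is a finite union of decomposition classes indexed by the nilpotent orbits of $\fl$, and the unique dense one is the regular decomposition class corresponding to the regular nilpotent orbit $\CO$, whence $Y = \overline{D}$.

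With $Y = \overline{D}$ in hand, Theorem \ref{thmB} gives that $\overline{D}$ is normal if and only if $\rho\colon \BBC[\ft]^W \to \BBC[\fz]^{C_\fz}$ is surjective, and Theorem \ref{main} converts this surjectivity into the displayed exponent condition.

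The main obstacle is the hypothesis in Theorem \ref{thmB} that $C_\fz$ act on $\fz$ as a reflection group. When this hypothesis holds the argument above completes the proof. When it fails, Proposition \ref{notref} forces $\rho$ to be non-surjective, and the exponent analysis underlying Theorem \ref{main} shows that the combinatorial condition fails as well; what remains is to verify separately that $\overline{D}$ is not normal in this case, which should follow from a mild extension of the argument of \cite{richardson:normality} or from the case analysis of \cite{broer:decomposition}.
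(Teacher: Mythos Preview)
Your approach is exactly the paper's: the authors state the theorem immediately after Theorem~\ref{thmB} with the single sentence ``Therefore, combining Theorems~\ref{main} and~\ref{thmB}, we obtain the following characterization\dots'' and give no further argument. They do not spell out the identification $\overline{D}=Y$, nor do they comment on the reflection-group hypothesis in Theorem~\ref{thmB}; your write-up is considerably more careful on both points.

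Your worry in the final paragraph is legitimate and is in fact an elision in the paper as well: as quoted, Theorem~\ref{thmB} assumes $C_\fz$ acts as a reflection group, so the forward implication (``$\overline{D}$ normal $\Rightarrow$ exponent condition'') is not literally covered when that hypothesis fails. The paper simply does not address this. The intended resolution is the one you suggest: Richardson's argument in \cite{richardson:normality} actually yields that normality of $Y$ forces $\BBC[\fz]^{C_\fz}$ to be a polynomial ring (equivalently, by Chevalley--Shephard--Todd, $C_\fz$ acts as a reflection group), so the hypothesis is automatically satisfied whenever $\overline{D}$ is normal; alternatively one can read this off from Broer's case analysis \cite{broer:decomposition}. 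Once that is granted, your chain ``$C_\fz$ not a reflection group $\Rightarrow$ $\rho$ not surjective (Proposition~\ref{notref}) $\Rightarrow$ exponent condition fails (Theorem~\ref{main})'' together with ``$\overline{D}$ not normal'' makes the equivalence hold vacuously in that case. So there is no missing idea in your proposal beyond what the paper itself leaves implicit.
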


Using case-by-case arguments Richardson \cite{richardson:normality}
determined all cases in which $\rho \colon \BBC [\ft]^W \to
\BBC[\fz]^{C_\fz}$ is surjective when $W$ is a Weyl group of classical
type. Broer \cite{broer:decomposition} computed almost all of the additional
cases for exceptional Weyl groups. The statement of \cite[Theorem 3.1
(e7)]{broer:decomposition} is missing one case: If $\fg$ is of type $E_7$
and $\fl$ is of type $(A_1^3)'$ (with simple roots $\alpha_2$, $\alpha_5$,
$\alpha_7$, where the labeling is as in \cite{bourbaki:groupes}), then the
restriction map $\rho$ is surjective.

\section{A preliminary result}\label{part1}

In this section we prove the following result.

\begin{proposition}\label{surj}
  Suppose $W\subseteq \GL(V)$ is a complex reflection group, $X$ is in the
  lattice of $\CA$, $C_X$ acts on $X$ as a reflection group, and the
  restriction mapping $\rho\colon \BBC[V]^W \to \BBC[X]^{C_X}$ is
  surjective. Then $\exp(C_X)\subseteq \exp(W)$ and $\CA(C_X) =
  \CA^X$. Thus, $\CA^X$ is a free arrangement and if $W$ is a Coxeter group,
  then $\exp( \CA( C_X))= \exp( \CA^X) \subseteq \exp( \CA)$.
\end{proposition}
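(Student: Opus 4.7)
The plan is to establish the two conclusions of the proposition in order: first the exponent inclusion $\exp(C_X) \subseteq \exp(W)$, and then the arrangement equality $\CA(C_X) = \CA^X$, from which freeness of $\CA^X$ and the concluding statement for Coxeter groups follow.

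For the exponent inclusion I would transport basic invariants. Fix a system $F_1, \dots, F_l$ of basic invariants for $W$ of degrees $d_1, \dots, d_l$. Surjectivity of $\rho$ says the restrictions $F_1|_X, \dots, F_l|_X$ generate $\BBC[X]^{C_X}$, which, because $C_X$ acts on $X$ as a reflection group, is a polynomial algebra minimally generated by $m = \dim X$ homogeneous elements of degrees $e_1, \dots, e_m$, the degrees of $C_X$. By the graded Nakayama lemma every homogeneous generating set of a graded polynomial algebra contains a minimal one, so after reindexing $F_1|_X, \dots, F_m|_X$ already form basic $C_X$-invariants on $X$; matching degrees gives $\{e_1,\dots,e_m\} \subseteq \{d_1,\dots,d_l\}$ as multisets, i.e.\ $\exp(C_X) \subseteq \exp(W)$.

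For the arrangement equality I would run a Jacobian comparison in the spirit of Denef and Loeser. Choose coordinates $x_1, \dots, x_l$ on $V$ with $X = \{x_{m+1} = \cdots = x_l = 0\}$. The Chevalley--Steinberg identity applied to the reflection group $C_X$ on $X$ gives
\[
J_X := \det\left(\frac{\partial(F_i|_X)}{\partial x_j}\right)_{1\le i,j\le m} = c\prod_{H' \in \CA(C_X)} \alpha_{H'}^{e_{H'}-1}
\]
for a nonzero constant $c$. At the same time, $J_X$ is the restriction to $X$ of the top-left $m \times m$ minor of the full $l\times l$ Jacobian of $F_1, \dots, F_l$, whose determinant factors (again by Chevalley--Steinberg, now for $W$ on $V$) as a nonzero scalar multiple of $\prod_{H \in \CA} \alpha_H^{e_H - 1}$. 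A Laplace-expansion analysis of this minor, combined with the factorization of the full Jacobian, shows that the only linear forms on $X$ which can occur as irreducible factors of $J_X$ are restrictions $\alpha_H|_X$ with $H \in \CA$ and $X \not\subseteq H$; equivalently, each $H' \in \CA(C_X)$ is of the form $X \cap H$ for some such $H$, yielding $\CA(C_X) \subseteq \CA^X$. A degree count against the first factorization, using the exponent inclusion already established, then forces equality $\CA(C_X) = \CA^X$. Freeness of $\CA^X$ is inherited from the reflection arrangement $\CA(C_X)$; and for Coxeter $W$ all reflections have order two, so $\exp(\CA) = \exp(W)$ and $\exp(\CA(C_X)) = \exp(C_X)$, producing the asserted chain $\exp(\CA(C_X)) = \exp(\CA^X) \subseteq \exp(\CA)$.

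The main obstacle is the Jacobian step, specifically the inclusion $\CA(C_X) \subseteq \CA^X$: a reflection in $C_X$ on $X$ is represented by some $w \in N_X$ which need not itself act as a reflection on $V$, so its fixed hyperplane in $X$ has no obvious reason to coincide with $X \cap H$ for a single $H \in \CA$. The Jacobian comparison bypasses this difficulty by forcing every irreducible divisor of $J_X$ on $X$ to be the restriction of a defining form of some hyperplane of $\CA$.
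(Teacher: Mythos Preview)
Your treatment of the exponent inclusion $\exp(C_X)\subseteq\exp(W)$ is fine and matches the paper's (which cites Richardson for the same graded-Nakayama step).

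The problem is in the arrangement equality, where you have the two inclusions reversed relative to what the Jacobian method actually delivers. By Steinberg, the zero locus of $J_X=\det\bigl(\partial(F_i|_X)/\partial x_j\bigr)_{1\le i,j\le m}$ in $X$ is exactly $\bigcup_{H'\in\CA(C_X)}H'$. What the Denef--Loeser computation shows (and what the paper carries out) is that $J_X$ \emph{vanishes} on every $K=H\cap X\in\CA^X$: one picks $v\perp H$, observes that $J(h)\cdot[v]=0$ for $h\in H$, and then uses that the last $l-m$ columns of $J$ vanish on $X$ (this is where one needs $X$ in the lattice: choose $w\in W$ with $\Fix(w)=X$ and take coordinates diagonalizing $w$) to deduce $J_1(k)\cdot[v_1]=0$ for $k\in K$ with $[v_1]\ne0$. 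Hence $K$ lies in the zero locus of $J_X$, i.e.\ $K\in\CA(C_X)$. So the Jacobian gives $\CA^X\subseteq\CA(C_X)$, not the inclusion you claim.

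Your proposed ``Laplace-expansion analysis'' to get $\CA(C_X)\subseteq\CA^X$ is where the gap lies: a general $m\times m$ minor of $J$ is not controlled by the factorization of $\det J$, and there is no mechanism forcing its irreducible factors on $X$ to be restrictions of the $\alpha_H$. Likewise the ``degree count'' forcing equality is not justified; nothing you have established relates $|\CA^X|$ (or a weighted version) to $\deg J_X$. In fact the inclusion $\CA(C_X)\subseteq\CA^X$ needs neither the Jacobian nor surjectivity: given $K\in\CA(C_X)$, choose $w\in N_X$ with $\Fix(w)\cap X=K$; by Steinberg's theorem (fixed-point subspaces lie in the lattice of $\CA$) one has $\Fix(w)=H_1\cap\cdots\cap H_n$ with $H_i\in\CA$, so $K=H_1\cap\cdots\cap H_n\cap X$, and since $\dim K=\dim X-1$ this forces $K=H_i\cap X$ for some $i$. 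This is precisely the ``obstacle'' you flag in your last paragraph, and the resolution is this lattice argument rather than a Jacobian comparison.
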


The proof shows that if $X$ is any subspace of $V$, $C_X$ acts on $X$ as a
reflection group, and $\rho$ is surjective, then $\exp(C_X)\subseteq \exp(W)$
and $\CA(C_X) \subseteq \CA^X$. The assumption that $X$ is in the lattice of
$\CA$ is only used to conclude that $\CA^X \subseteq \CA(C_X)$.

By assumption, the restriction mapping $\rho\colon \BBC[V]^W \to
\BBC[X]^{C_X}$ is a degree-preserving, surjective homomorphism of graded
polynomial algebras and so by a result of Richardson
\cite[\S4]{richardson:normality}, we may choose algebraically independent,
homogeneous polynomials $f_1$, \dots, $f_r$ in $\BBC[V]^W$ so that
$\BBC[V]^W= \BBC[f_1, \dots, f_r]$ and $\BBC[X]^{C_X}= \BBC[\rho(f_1),
\dots, \rho(f_l)]$. Since $\exp(C_X)= \{ \deg f_1-1, \dots, \deg f_l-1\}$
and $\exp(W)= \{ \deg f_1-1, \dots, \deg f_r-1\}$, we have $\exp( C_X)
\subseteq \exp( W)$.

We next show that $\CA(C_X) \subseteq \CA^X$. Suppose $K$ is in
$\CA(C_X)$. By assumption there is a $w$ in $N_X$ so that $\Fix(w)\cap X=
K$. It is shown in \cite[Theorem~ 6.27]{orlikterao:arrangements} that
$\Fix(w)$ is in the lattice of $\CA$, say $\Fix(w)= H_1\cap \dotsm\cap H_n$,
where $H_1$, \dots, $H_n$ are in $\CA$. Then $K= H_1\cap \dots\cap H_n \cap
X$. Since $\dim K=\dim X-1$, it follows that $K=H_i\cap X$ for some $i$ and
so $K$ is in $\CA^X$.

It remains to show that $\CA^X \subseteq \CA(C_X)$. We use a variant of an
argument given by Denef and Loeser \cite{denefloeser:regular} (see also
\cite{lehrerspringer:intersection}).

Suppose that homogeneous polynomial invariants $\{f_1, \dots, f_r\}$ have
been chosen as above. Let $J$ denote the $r\times r$ matrix whose $(i,j)$
entry is $\frac {\partial f_i} {\partial x_j}$ and let $J_1$ denote the
$l\times l$ submatrix of $J$ consisting of the first $l$ rows and
columns. Then $J$ and $J_1$ are matrices of functions on $V$.  For $v$ in
$V$, let $J(v)$ and $J_1(v)$ be the matrices obtained from $J$ and $J_1$
respectively by evaluating each entry at $v$.

Then $\det J_1$ is in $\BBC[V]$ and by a result of Steinberg (see
\cite[\S6.2]{orlikterao:arrangements}) the zero set of $\rho(\det J_1) =\det
\rho(J_1)$ in $X$ is precisely $\bigcup_{K\in \CA(C_X)} K$.  Thus, to show
that $\CA^X \subseteq \CA(C_X)$ it is enough to show that if $K$ is in
$\CA^X$, then $\rho(\det J_1)$ vanishes on $K$.

Denef and Loeser have shown that if $w$ is in $W$, $v_1$ and $v_2$ are
eigenvectors for $w$ with eigenvalues $\lambda_1$ and $\lambda_2$
respectively, and $f$ in $\BBC[V]^W$ is homogeneous with degree $d$, then
$\lambda_2 D_{v_2}(f)(v_1)= \lambda_1^{1-d} D_{v_2}(f)(v_1)$, where $D_v(f)$
denotes the directional derivative of $f$ in the direction of $v$. This
proves the following lemma.

\begin{lemma}\label{zero}
  Suppose $w$ is in $W$, $x$ is in $\Fix(w)$ and $v$ in $V$ is an
  eigenvector of $w$ with eigenvalue $\lambda\ne 1$. Then $D_v(f)(x)=0$ for
  every $f$ in $\BBC[V]^W$.
\end{lemma}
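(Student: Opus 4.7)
My plan is to apply the Denef--Loeser identity recorded in the paragraph immediately preceding the lemma, specialized to the choice $v_1 = x$ and $v_2 = v$. Since $x \in \Fix(w)$, it is an eigenvector of $w$ with eigenvalue $\lambda_1 = 1$, while by hypothesis $v$ is an eigenvector with eigenvalue $\lambda_2 = \lambda \ne 1$. For a homogeneous $W$-invariant polynomial $f$ of degree $d$, substituting these values into the identity yields $\lambda \, D_v(f)(x) = 1^{1-d} D_v(f)(x) = D_v(f)(x)$, so $(\lambda - 1) D_v(f)(x) = 0$. Since $\lambda \ne 1$, this forces $D_v(f)(x) = 0$.

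To handle an arbitrary, not necessarily homogeneous $f \in \BBC[V]^W$, I would invoke the fact that $W$ acts linearly on $V$ and therefore preserves the grading on $\BBC[V]$. Each homogeneous component $f_d$ of $f$ is itself $W$-invariant, so $f = \sum_d f_d$ with each $f_d \in \BBC[V]^W$ homogeneous of degree $d$, and linearity of the directional derivative then gives $D_v(f)(x) = \sum_d D_v(f_d)(x) = 0$. There is essentially no substantive obstacle: the lemma is an immediate corollary of the cited identity together with this routine reduction to the homogeneous case.
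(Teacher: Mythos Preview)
Your proof is correct and follows exactly the approach the paper intends: the paper states the Denef--Loeser identity $\lambda_2 D_{v_2}(f)(v_1)= \lambda_1^{1-d} D_{v_2}(f)(v_1)$ for homogeneous $f$ and then simply asserts that this proves the lemma, and your specialization $v_1=x$, $v_2=v$, $\lambda_1=1$, $\lambda_2=\lambda$ is precisely what is meant. Your additional remark on reducing a general $f$ to its homogeneous components is a harmless clarification that the paper leaves implicit.
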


Suppose $H$ is in $\CA$, $s$ is a reflection in $W$ that fixes $H$, and $v$
is orthogonal to $H$ with respect to some $W$-invariant inner product on
$V$. Since $H$ is the full $1$-eigenspace of $s$ in $V$, Lemma \ref{zero}
shows that
\begin{equation}
  \label{eq:e}
  D_v(f) \quad \text{vanishes on $H$ for every $f$ in $\BBC[V]^W$.} 
\end{equation}

By \cite[Theorem~ 6.27]{orlikterao:arrangements}, we may find $w$ in $W$
with $\Fix(w)=X$. Choose a basis $\{b_1, \dots, b_r\}$ of $V$ consisting of
eigenvectors for $w$ so that $\{b_1, \dots, b_l\}$ is a basis of $X$. Let
$\{x_1, \dots, x_r\}$ denote the dual basis of $V^*$. Since $X$ is the full
$1$-eigenspace of $w$ in $V$, Lemma \ref{zero} shows that
\begin{equation}
  \label{eq:x}
  \text{for $j>l$, $D_{b_j}(f)= \frac {\partial f} {\partial x_j}$ 
    vanishes on $X$ for every $f$ in $\BBC[V]^W$.}  
\end{equation}

Now suppose $K$ is in $\CA^X$. Say $K=H\cap X$, where $H$ is in $\CA$ with
$X\not\subseteq H$. Choose $v$ in $V$ orthogonal to $H$ with respect to a
$W$-invariant inner product. Say $v=\sum_{i=1}^r \xi_i b_i$. Define $[v]$ to
be the column vector whose $i\th$ entry is $\xi_i$ for $1\leq i\leq r$ and
$[v_1]$ to be the column vector whose $i\th$ entry is $\xi_i$ for $1\leq
i\leq l$. It follows from (\ref{eq:e}) that $J(h) \cdot [v]=0$ for every $h$
in $H$. Therefore, it follows from (\ref{eq:x}) that $J_1(k)\cdot [v_1]=0$
for every $k$ in $K$. Since $X\not\subseteq H$, we have $[v_1]\ne0$ and so
it must be the case that for $k$ in $K$, the matrix $J_1(k)$ is not
invertible.  Therefore, $\det J_1$ vanishes on $K$ and so $\rho( \det J_1)$
vanishes on $K$.  Thus, $K$ is in $\CA( C_X)$. This completes the proof of
Proposition \ref{surj}.

\section{Completion of the proof of Theorem \ref{main}}\label{proof}

In this section we complete the proof of Theorem \ref{main} and show that if
$W$ is a Coxeter group, $V$ affords the reflection representation of $W$,
and $X$ is in the lattice of $\CA$, then $\rho\colon \BBC[V]^W \to
\BBC[X]^{C_X}$ is surjective if and only if $\exp( \CA(C_X)) = \exp( \CA^X)
\subseteq \exp( \CA)$.

In the arguments below, ``degree'' means with respect to the natural grading
on $\BBC[V]$. For an integer $d$, let $\BBC[V]_d$ denote the subspace of
elements of degree $d$. For a subalgebra $R$ of $\BBC[V]$ we set $R_d= R\cap
\BBC[V]_d$. After choosing an appropriate basis of $V$ we may consider
$\BBC[X]$, $\BBC[X]^{C_X}$, and $\BBC[X]^{C_X^\r}$ as subalgebras of
$\BBC[V]$.

Also, we use the conventions that in type $A$, $A_{-1}$ and $A_0$ are to
be interpreted as the trivial group; in type $B$, $B_0$ is to be interpreted
as the trivial group and $B_1$ is to be interpreted as a component of type
$A_1$ supported on a short root; and in type $D$, $D_1$ is to be interpreted
as the trivial group and $D_2$ is to be interpreted as a component of type
$A_1 \times A_1$ supported on the two distinguished end nodes in the Coxeter
graph.

It is easy to see that if $W=W_1\times W_2$ is reducible, then Theorem
\ref{main} holds for $W$ if and only if it holds for $W_1$ and $W_2$. Thus,
we may assume that $W$ is an irreducible Coxeter group.

Fix a generating set $S$ in $W$ so that $(W,S)$ is a Coxeter system.  For a
subset $I$ of $S$ define $X_I =\cap_{s\in I} \Fix(s)$ and $W_I= \langle
I\rangle$, the subgroup of $W$ generated by $I$. Orlik and Solomon
\cite{orliksolomon:coxeter} have shown that there is a $w$ in $W$ and a
subset $I$ of $S$ so that $w(X)=X_I$, $wZ_Xw\inverse= W_I$, and
$wN_Xw\inverse = N_W(W_I)$.  Howlett \cite{howlett:normalizers} has shown
that $W_I$ has a canonical complement, $C_I$, in $N_W(W_I)$. 

We say that \emph{$C_I$ acts on $X_I$ as a Coxeter group with full rank} if
$C_I= C_I^\r$ and the Coxeter rank of $C_I$ equals the dimension of
$X_I$. For example, if $W$ is of type $E_6$ and $W_I$ is of type $A_1\times
A_2$, then $C_I=C_I^\r$ is of type $A_2$ and $\dim X_I=3$, so $C_I$ does not
act on $X_I$ as a Coxeter group with full rank. Another example is when $W$
is of type $I_2(r)$ with $r$ odd and $I$ is a one element subset of $S$. In
this case, $C_I$ is the trivial group and $X_I$ is one-dimensional.

Suppose now that the restriction mapping $\rho$ is surjective. It follows
from the next proposition that $C_X$ acts on $X$ as a Coxeter group with
full rank. In particular, we may apply Proposition \ref{surj} and conclude
that $\exp( \CA(C_X)) = \exp( \CA^X) \subseteq \exp( \CA)$. This proves the
forward implication of Theorem \ref{main}.

\begin{proposition} \label{notref} Suppose $W$ is a Coxeter group, $X$ is in
  the lattice of $\CA$, and $C_X$ does not act on $X$ as a Coxeter group
  with full rank. Then the restriction mapping $\rho\colon \BBC[V]^W \to
  \BBC[X]^{C_X}$ is not surjective.
\end{proposition}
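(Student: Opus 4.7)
The plan is to prove the contrapositive: assume $\rho$ is surjective, and show $C_X = C_X^\r$ together with the Coxeter rank of $C_X$ being equal to $\dim X$. Since $W$ may be taken irreducible by the reduction at the start of the section, the key observation is that every fundamental $W$-invariant has degree at least $2$; equivalently $\BBC[V]^W_1 = (V^*)^W = 0$, so $\rho(\BBC[V]^W)_1 = 0$.

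The first step of my plan is to deduce from this that $X^{C_X} = 0$. Since $\dim U^G = \dim (U^*)^G$ for any complex finite-dimensional representation $U$ of a finite group $G$, the degree-$1$ piece $\BBC[X]^{C_X}_1 = (X^*)^{C_X}$ has dimension $\dim X^{C_X}$. Surjectivity of $\rho$ in degree $1$ therefore forces $X^{C_X} = 0$. In the sub-case $C_X = C_X^\r$, this already completes the proof: $X^{C_X} = 0$ together with $C_X$ acting by (real) reflections on $X$ is exactly the statement that the Coxeter rank of $C_X$ equals $\dim X$, so $C_X$ acts with full rank.

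The remaining sub-case $C_X \neq C_X^\r$ is the main obstacle. My plan is to reduce it to the first sub-case by establishing the structural claim that in the real Coxeter setting with $X$ in the lattice of $\CA$, one has $C_X \neq C_X^\r \Rightarrow X^{C_X} \neq 0$. This would be argued using Howlett's canonical description of the complement $C_I$ inside $N_W(W_I)$: a non-reflection element of $C_I$ lifts to an element of $N_W(W_I)$ that realizes a nontrivial diagram automorphism of the parabolic $W_I$, and in the real Coxeter case any such extension acts on $X_I$ preserving a nonzero fixed direction (coming from the ``symmetric'' combination of the root-data that the diagram automorphism equates). Making this uniform across the irreducible Coxeter types is the most delicate point of the proof; in the absence of a clean uniform argument, a type-by-type inspection via Howlett's tables, restricted to those pairs $(W, I)$ for which $C_I \neq C_I^\r$, will establish the reduction and hence complete the proof.
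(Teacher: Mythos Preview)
Your degree-$1$ argument for the sub-case $C_X = C_X^\r$ is correct and in fact slightly cleaner than the paper's treatment: surjectivity of $\rho$ in degree $1$ together with $(V^*)^W=0$ forces $(X^*)^{C_X}=0$, and for a real reflection group this is exactly the full-rank condition. The paper reaches the same conclusion by comparing degree-$2$ pieces instead.

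The genuine gap is in the sub-case $C_X\neq C_X^\r$. Your structural claim ``$C_X\neq C_X^\r \Rightarrow X^{C_X}\neq 0$'' is false, and the proposed Howlett-based justification cannot be completed. The counterexamples are precisely the four pairs the paper isolates: $(E_7,A_2)$, $(E_8,A_2)$, $(E_8,A_1A_2)$, $(E_8,A_4)$. In each of these, $C_I^\r$ is an \emph{irreducible} Coxeter group (of type $A_5$, $E_6$, $A_5$, $A_4$ respectively) whose Coxeter rank equals $\dim X_I$; hence already $X_I^{C_I^\r}=0$, and a fortiori $X_I^{C_I}=0$, even though $C_I\neq C_I^\r$. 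Your heuristic that a diagram automorphism ``preserves a nonzero fixed direction'' fails here: the order-$2$ element of $C_I/C_I^\r$ acts on $X_I$ as the $-w_0$ involution of the underlying $C_I^\r$-representation, and for types $A_5$, $E_6$, $A_4$ this involution has no nonzero fixed vectors on the reflection representation. So in these four cases the degree-$1$ comparison yields $0=0$ and gives no obstruction.

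The paper resolves these four cases by moving to higher degree: using that $C_I/C_I^\r$ is an elementary abelian $2$-group one has $\BBC[X_I]^{C_I}_{2n}=\BBC[X_I]^{C_I^\r}_{2n}$ for all $n$, and then an explicit comparison of $\dim\BBC[V]^W_d$ with $\dim\BBC[X_I]^{C_I^\r}_d$ for $d=4$ or $d=6$ (depending on the case) produces the required inequality. Your reduction strategy therefore needs to be replaced, at least for these four exceptional pairs, by an argument of this kind.
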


\begin{proof}
  We may assume that $W$ is irreducible and that $X=X_I$ for some subset $I$
  of $S$. Then $W_X=W_I$, $N_X=N_W(W_I)$, and $C_X=C_I$. To show that $\rho$
  is not surjective, in each case when $C_I$ does not act on $X_I$ as a
  Coxeter group with full rank, we find an integer $d$ so that $\dim
  \BBC[V]^W_d < \dim \BBC[X_I]^{C_I}_d$. It then follows that
  $\BBC[X_I]^{C_I}_d$ is not contained in the image of $\rho$.

  If $I=\emptyset$ or $I=S$, then $C_I$ acts on $X_I$ as a Coxeter group
  with full rank. Thus, we may assume that $I$ is a non-empty, proper subset
  of $S$.

  Howlett \cite{howlett:normalizers} has computed $C_I$, $C_I^\r$, and the
  representation of $C_I$ on $X_I$ for all Coxeter groups with rank greater
  than two. When $W$ has rank two, $W$ is of type $I_2(r)$ for some $r$.  It
  is easy to see that in this case $C_I$ acts on $X_I$ as a Coxeter group
  with full rank unless $r$ is odd and $|I|=1$. Then, as noted above, $C_I$
  is the trivial group acting on the one-dimensional vector space $X_I$.
 
  The subgroup $C_I^\r$ is always a normal subgroup of $C_I$ and it turns
  out that if $C_I^\r\ne C_I$, then $C_I$ is the semidirect product of
  $C_I^\r$ with an elementary abelian $2$-group. Notice that if $w$ is any
  element in $C_I$ with order two, then $w$ acts on $X_I$ with eigenvalues
  $\pm1$, and so $w$ fixes every even degree, homogeneous, polynomial
  function on $X_I$. Therefore,
  \[
  \BBC[X_I]^{C_I}_{2n} = \BBC[X_I]^{C_I^\r}_{2n}
  \]
  for all $n$. Consequently, if either $C_I^\r$ is reducible or $C_I^\r$ is
  irreducible and the Coxeter rank of $C_I^\r$ is strictly less than the
  dimension of $X_I$, then $\dim \BBC[ X_I]^{ C_I}_2>1= \dim \BBC[V]^W_2$
  and so $\rho$ is not surjective.

  It remains to consider the cases when $C_I\ne C_I^\r$, $C_I^\r$ is
  irreducible, and the Coxeter rank of $C_I^\r$ equals $\dim X_I$.

  If $W$ is a dihedral group, then $C_I=C_I^\r$ for all $I$.

  If $W$ is of classical type and $C_I\ne C_I^\r$, then $W$ is of type $D_r$
  and $W_I$ has only components of type $A$. Suppose that this is the
  case. Then it follows from Howlett's computations
  \cite{howlett:normalizers} that whenever $C_I\ne C_I^\r$, either $C_I^\r$
  is reducible or the Coxeter rank of $C_I^\r$ is strictly less than the
  dimension of $X_I$.

  There are four cases when $C_I\ne C_I^\r$, $C_I^\r$ is irreducible, and
  the Coxeter rank of $C_I^\r$ equals $\dim X_I$: either $W$ is of type
  $E_7$ and $W_I$ is of type $A_2$, or $W$ is of type $E_8$ and $W_I$ is of
  type $A_2$, $A_1\times A_2$, or $A_4$.

  Suppose $W$ is of type $E_7$ and $W_I$ is of type $A_2$, or that $W$ is of
  type $E_8$ and $W_I$ is of type $A_1\times A_2$. We show that $\dim
  \BBC[V]^W_4< \dim \BBC[X_I]^{C_I}_4$. Fix $f_2\ne 0$ in
  $\BBC[V]^W_2$. Because the two smallest exponents of $W$ are $1,5$ and
  $1,7$, respectively, it follows that $\BBC[V]^W_4$ is one-dimensional with
  basis $\{f_2^2\}$. Since $C_I^\r$ is of type $A_5$ in both cases, we have
  $\dim \BBC[X_I]^{C_I}_4= \dim \BBC[X_I]^{C_I^\r}_4 =2$.

  Finally, suppose $W$ is of type $E_8$ and $W_I$ is of type $A_2$ or
  $A_4$. We show that $\dim \BBC[V]^W_6< \dim \BBC[X_I]^{C_I}_6$. Fix
  $f_2\ne 0$ in $\BBC[V]^W_2$. Since the two smallest exponents of $W$ are
  $1$ and $7$, it follows that $\BBC[V]^W_6$ is one-dimensional with basis
  $\{f_2^3\}$. Because $C_I^\r$ is of type $E_6$ when $W_I$ is of type $A_2$
  and that $C_I^\r$ is of type $A_4$ when $W_I$ is of type $A_4$, we have
  $\dim \BBC[X_I]^{C_I}_6= 2$ in the first case, and $\dim
  \BBC[X_I]^{C_I}_6= 3$ in the second. This completes the proof of the
  proposition.
\end{proof}

To complete the proof of Theorem \ref{main} we suppose that $\exp( C_X) =
\exp( \CA^X) \subseteq \exp( \CA)$ and show that $\rho \colon \BBC[V]^W \to
\BBC[X]^{C_X}$ is surjective. Our argument is case-by-case, using the
computation of $\exp(\CA^X)$ by Orlik and Solomon
\cite{orliksolomon:coxeter}, Howlett's results in
\cite{howlett:normalizers}, and some computer-aided computations using GAP
\cite{gap3} for six cases when $W$ is of exceptional type. For $W$ of
classical type, our argument is similar to that of Richardson
\cite{richardson:normality}, but more streamlined, especially when $W$ is of
type $D_r$, because of our assumptions on $\CA^X$.

As above, we may assume that $W$ is irreducible and that $X=X_I$ for some
proper, non-empty, subset $I$ of $S$. Then $W_X=W_I$, $N_X=N_W(W_I)$, and
$C_X=C_I$. Notice that it follows from the assumption $\exp(C_I)\subseteq
\exp(\CA)$ that $C_I^\r$ is irreducible.

Suppose first that $W$ is classical of type $A_r$, $B_r$, or $D_r$ with
$r\geq 1$, $r\geq 2$, and $r\geq 4$, respectively. Say $W_I$ has $m_i$
components of type $A_i$ and a component of type $B_j$ or $D_j$, where
$j\geq 0$. In type $A$ we set $j=-1$. Set $k= j +\sum_i (i+1)m_i$. Then $k$
is minimal so that $W_I$ may be embedded in a Coxeter group of type $A_k$,
$B_k$, or $D_k$. The group $C_I^\r$ is given as follows:
\begin{itemize}
\item $\prod_{i} A_{m_i-1} \times A_{r-k-1}$ if $W$ is of type $A_r$,
\item $\prod_{i} B_{m_i} \times B_{r-k}$ if $W$ is of type $B_r$, 
\item $\prod_{i} B_{m_i} \times B_{r-k}$ if $W$ is of type $D_r$ and
  $j\ne0$, and
\item $\prod_{\text{$i$ even}} D_{m_i} \times \prod_{\text{$i$ odd}}
  B_{m_i} \times D_{r-k}$ if $W$ is of type $D_r$ and $j=0$.
\end{itemize}

The exponents of $\CA^{X_I}$ have been computed by Orlik and Solomon
in \cite{orliksolomon:coxeter}. Set $l=\dim X_I$. Then
$\exp(\CA^{X_I})$ is given as follows:

\begin{itemize}
\item $\{1,2,3, \dots, l\}$ if $W$ is of type $A_r$,
\item $\{1,3,5, \dots, 2l-1\}$ if $W$ is of type $B_r$,
\item $\{1,3,5, \dots, 2l-1\}$ if $W$ is of type $D_r$ and $j\ne0$, and
\item $\{1,3,5 \dots, 2l-3, l-1+\sum_i m_i \}$ if $W$ is of type $D_r$
  and $j=0$.
\end{itemize}

\medskip

\subsubsection*{Type $A_r$}
Suppose $W$ is of type $A_r$. If $r-k-1>0$, then since $C_I$ is irreducible
it must be that $m_i\leq 1$ for all $i$. Then $\exp(C_I)= \{1,2, \dots,
r-\sum_i (i+1)\}$ and $\exp(\CA^{X_I})= \{1,2, \dots, r-\sum_i i\}$, and so
$r-\sum_i (i+1)= r-\sum_ii$, which is absurd.  Therefore, $r-k-1\leq
0$. Thus, $r\leq k+1$ and $W_I$ is of type $A_d^m$. In this case,
$\exp(C_I)= \{1,2, \dots ,m-1\}$, $\dim X_I= r-dm$, and $\exp(\CA^{X_I})=
\{1,2, \dots, r-dm\}$.  Therefore, $m-1=r-dm$. We conclude that $\exp(C_I)=
\exp(\CA^{X_I}) \subseteq \exp(\CA)$ if and only if $W_I$ is of type $A_d^m$,
where $r$, $d$, and $m$ are related by the equation $r+1=(d+1)m$.

Now suppose that $W_I$ is of type $A_d^m$ with $r+1=(d+1)m$. Then
identifying $W$ with the symmetric group $S_{r+1}$ acting on $\BBC^{r+1}$,
$V$ with the subspace of $\BBC^{r+1}$ consisting of all vectors whose
components sum to zero, $W_I$ with the Young subgroup $S_{d+1}^m \subseteq
S_{r+1}$, and taking the power sums as a set of fundamental polynomial
invariants for $S_{r+1}$, it is straightforward to check that $\rho$ is
surjective.

\subsubsection*{Type $B_r$}
Suppose that $W$ is of type $B_r$ with $r\geq 2$. Since $C_I$ is
irreducible, there is at most one value of $i$ with $m_i>0$. Suppose first
that there is a value of $i$ with $m_i>0$. Say $W_I$ has type $A_d^m \times
B_j$. Then we must have $r-k=0$ and so $r$, $j$, $d$, and $m$ are related by
$r=j+(d+1)m$. In this case, $C_I$ has type $B_m$ and $\dim X_I=
r-j-dm=m$. Thus $\exp(C_I)=\{\, 1,3, \dots, 2m-1\}= \exp(\CA^{X_I})$. On the
other hand, if $m_i=0$ for all $i$, then $W_I$ is of type $B_j$, $C_I$ is of
type $B_{r-j}$, $\dim X_I=r-j$, and $\exp(C_I)=\{1,3, \dots, 2(r-j)-1\}=
\exp(\CA^{X_I})$.  We conclude that $\exp(C_I)= \exp(\CA^{X_I}) \subseteq
\exp(\CA)$ if and only if $W_I$ is of type $A_d^m \times B_j$, where if
$m>0$, then $r$, $d$, $j$, and $m$ satisfy $r=j+(d+1)m$.

Now suppose that $W_I$ is of type $A_d^m \times B_j$ with $r= j+(d+1)m$ if
$m>0$. We may consider $W$ as signed permutation matrices acting on
$\BBC^r$.  Let $x_1$, \dots, $x_r$ denote the coordinate functions on
$\BBC^r$.  Then $\BBC[V]^W= \BBC[x_1, \dots, x_r]^W= \BBC[f_2, f_4, \dots,
f_{2r}]$, where $f_{2p}$ is the $p\th$ elementary symmetric function in $\{
x_1^2, \dots, x_r^2\}$. In case $m>0$, we may choose coordinate functions
$\{y_1, \dots, y_m\}$ on $X_I$ so that $C_I$ acts as signed permutations on
the coordinates and the restriction map $\BBC[V]\to \BBC[X_I]$ is given by
mapping $x_{p(d+1)+q}$ to $y_p$ for $0\leq p\leq m-1$ and $1\leq q\leq d+1$,
and $x_t$ to zero for $t>r-j=(d+1)m$.  It is then easily checked that
$\rho\colon \BBC[x_1, \dots, x_r]^W \to \BBC[y_1, \dots, y_{m}]^{C_I}$ is
surjective. In case $m=0$ we may take $C_I$ to act on the first $r-j$
components of $\BBC^r$ and so the restriction map $\BBC[V] \to \BBC[X_I]$ is
given by evaluating $x_{r-j+1}$, \dots, $x_r$ at zero.  It is now easily
checked that $\rho\colon \BBC[x_1, \dots, x_r]^W \to \BBC[x_1, \dots,
x_{r-j}]^{C_I}$ is surjective.

\subsubsection*{Type $D_r$} 
Suppose that $W$ is of type $D_r$ with $r\geq 4$. In case $j\ne0$ the
argument for type $B$ applies almost verbatim ($B_j$ is replaced by $D_j$)
and shows that $\exp(C_I)= \exp(\CA^{X_I}) \subseteq \exp(\CA)$ if and only
if $W_I$ is of type $A_d^m\times D_j$, where if $m>0$, then $r$, $d$, $j$,
and $m$ satisfy $r=j+(d+1)m$. In the case when $j=0$, the arrangement
$\CA^{X_I}$ is a Coxeter arrangement if and only if either $\sum_i m_i=0$,
in which case it is a Coxeter arrangement of type $D_l$, or $\sum_i m_i=l$,
in which case it is a Coxeter arrangement of type $B_l$. Since $\sum_i
m_i\ne0$, we must have that $\sum_i m_i=l = r-\sum_i im_i$ and $\CA^{X_I}$
is of type $B_l$. Thus, $C_I^\r$ must be of type $B_l$ and so $W_I$ must be
of type $A_d^m$, where $d$ is odd and $r=(d+1)m$. We conclude that if $j=0$,
then $\exp(C_I)= \exp(\CA^{X_I}) \subseteq \exp(\CA)$ if and only if $W_I$
is of type $A_d^m$, where $d$ is odd and $r=(d+1)m$.

Now suppose that $W_I$ is of type $A_d^m \times D_j$, where if $j,m>0$, then
$r=j+(d+1)m$, and if $j=0$, then $d$ is odd and $r=(d+1)m$. We may consider
$W$ as signed permutation matrices with determinant $1$ acting on
$\BBC^r$. Then $\BBC[V]^W= \BBC[x_1, \dots, x_r]^W= \BBC[f_2, f_4, \dots,
f_{2r-2}, g_r]$ where $f_{2p}$ is the $p\th$ elementary symmetric function
in $\{ x_1^2, \dots, x_r^2\}$ and $g_r=x_1\dotsm x_r$. The argument showing
that $\rho$ is surjective when $W$ is of type $B$ applies word for word to
show that $\rho$ is surjective in this case as well.

\medskip

In order to determine the remaining cases when $\exp(C_I) =\exp(\CA^I)
\subseteq \exp(\CA)$, we fix a root system $\Phi$ for $W$. Then
$\Phi\subseteq V^*$ and the choices of $S$ and $I$ determine a positive
system and a closed parabolic subsystem denoted by $\Phi^+$ and $\Phi_I$,
respectively. For $\alpha$ in $\Phi$, we have $\alpha|_{X_I} \ne 0$ if and
only if $\alpha\not \in \Phi_I$.

If $W_I$ is a maximal parabolic subgroup of $W$ and $\exp(C_I) =\exp(\CA^I)
\subseteq \exp(\CA)$, then $C_I$ is of type $A_1$ and acts as $-1$ on the
one-dimensional space $X_I$. By \cite[Ch.~VI \S1.1]{bourbaki:groupes}, $f_2=
\sum_{\alpha\in \Phi} \alpha^2$ is a non-zero polynomial in
$\BBC[V]^W_2$. Fix $\beta$ in $\Phi^+\setminus \Phi_I$. Then
$\{\beta|_{X_I}\}$ is a basis of $X_I^*$. If $g_2= \beta|_{X_I}^2$, then
$\BBC[X_I]^{C_I}= \BBC[ g_2]$. Since $\alpha|_{X_I}$ is a non-zero multiple
of $\beta|_{X_I}$ for $\alpha$ in $\Phi^+\setminus \Phi_I$, it follows that
$\rho(f_2)$ is a non-zero multiple of $g_2$ and so $\rho$ is surjective.

Suppose that $W$ is of type $I_2(r)$ and $|I|=1$. We have observed above
that if $r$ is odd, then $C_I$ is the trivial group, so $\exp(C_I)=\{0\}$
and $\exp(\CA^I) =\{1\}$. On the other hand, if $r$ is even, then $
\exp(C_I) =\exp(\CA^I)= \{1\}$ and $ \exp(\CA)=\{1, m-1\}$ and so $\exp(C_I)
=\exp(\CA^I) \subseteq \exp(\CA)$.

Our computations when $W$ is of classical or dihedral type are summarized in
Table \ref{classical}.

\begin{table}[h!tb]
\renewcommand{\arraystretch}{1.5}
\begin{tabular}{c|c|l}
  $W$ & $W_I$ \\
  \hline
  \hline
  $A_r$& $A_d^m$ & $r+1=(d+1)m$ \\
  $B_r$& $A_d^m B_j$ &  $m>0 \Rightarrow r=j+(d+1)m$\\ 
  $D_r$& $A_d^m D_j$ &  $[j,m>0 \Rightarrow r=j+(d+1)m]$ or 
  $[j=0 \Rightarrow m\ \text{odd}\ \wedge\ r=(d+1)m]$\\
  $I_2(r)$ & $A_1,\, \widetilde A_1$ & \text{$r$ even}
\end{tabular}
\medskip
\caption{Pairs $(W, W_I)$ with $W$ classical or dihedral, $\emptyset\ne I\ne
  S$, and $\exp(C_I)  =\exp(\CA^I) \subseteq \exp(\CA)$.} \label{classical} 
\end{table}

\medskip

Finally, suppose that $W$ is of exceptional type. The pairs $(W, W_I)$ for
which $\exp(C_I) =\exp(\CA^I) \subseteq \exp(\CA)$ are given in Table
\ref{exceptional}. The notation is as in \cite{orliksolomon:coxeter}.

\begin{table}[h!tb]
\renewcommand{\arraystretch}{1.5}
\begin{tabular}{c|cccccccccc}
  $W$ & $W_I$ \\
  \hline
  \hline
  $E_6$ & $A_2^2$ & $A_1 A_2^2$ & $A_5$ \\
  $E_7$ &  $(A_1^3)'$ & $A_1^3 A_2$ & $A_5'$ & $A_1 A_2
  A_3$ & $A_2 A_4$ & $A_1 A_5$ &  $A_6$ & $A_1 D_5$ & $D_6$ &
  $E_6$ \\ 
  $E_8$ &  $A_1 A_2 A_4$ & $A_3 A_4$ & $A_1 A_6$ & $A_7$ & $A_2 D_5$ &
  $D_7$ & $A_1 E_6$ & $E_7$ \\ 
  $F_4$ &  $A_2$ & $\widetilde A_2$ & $C_3$ & $B_3$ & $A_1 \widetilde
  A_2$ & $\widetilde A_1 A_2$  \\ 
  $G_2$ & $A_1$ & $\widetilde A_1$ \\
  $H_3$ & $A_1A_1$ & $A_2$ &$I_2(5)$ \\
  $H_4$ & $A_1A_2$ & $A_3$ &$A_1I_2(5)$ &$H_3$\\
\end{tabular}
\bigskip
\caption{Pairs $(W, W_I)$ with $W$ of exceptional type, $\emptyset\ne I\ne
  S$, and $\exp(C_I)  =\exp(\CA^I) \subseteq
  \exp(\CA)$.} \label{exceptional} 
\end{table}

We have seen above that if $W_I$ is maximal and $\exp(C_I) =\exp(\CA^I)
\subseteq \exp(\CA)$, then $\rho$ is surjective. For the remaining six
cases, $A_2^2$ in $E_6$; $(A_1^3)'$, $A_1^3 \times A_2$, and $A_5'$ in
$E_7$; and $A_2$ and $\widetilde A_2$ in $F_4$, the type of $C_I$ is given
in Table \ref{ci}.

\begin{table}[h!tb]
\renewcommand{\arraystretch}{1.5}
\begin{tabular}{c||c|ccc|cc}
  $W$ & $E_6$ & $E_7$&&& $F_4$&\\
  \hline
  $W_I$ & $A_2^2$ & $(A_1^3)'$ &  $A_1^3 A_2$ & $A_5'$ &  $A_2$ &
  $\widetilde A_2$\\ 
  \hline 
  $C_I$ & $G_2$ & $F_4$ & $G_2$&$G_2$ &$G_2$& $G_2$
\end{tabular}
\bigskip
\caption{Triples $(W, W_I, C_I)$ with $W$ of exceptional type, $\emptyset\ne
  I$,  $|I|<r-1$, and $\exp(C_I)  =\exp(\CA^I) \subseteq 
  \exp(\CA)$.} \label{ci} 
\end{table}

For these six cases, the fact that $\rho$ is surjective was checked directly
by implementing the following argument using GAP \cite{gap3} and the CHEVIE
package \cite{chevie}.

\begin{enumerate}
\item For $s$ in $S$ let $\alpha_s$ and $\omega_s$ denote the simple
  root in $V^*$ and the fundamental dominant weight in $V^*$
  determined by $s$ respectively. Then $\{\,\omega_s\mid s\notin
  I\,\}$ is a basis of $X_I^*$ and $\{\,\omega_s\mid s\notin I\,\}
  \cup \{\,\alpha_s\mid s\in I\,\}$ is a basis of $V^*$. This basis
  can be computed from the basis consisting of simple roots using the
  Cartan matrix of $W$. The restriction mapping $\BBC[V]\to \BBC[X_I]$
  is then given by evaluating $\alpha_s$ at zero for $s$ in $I$.
\item Suppose that the exponents of $W$ are $\{d_1-1, d_2-1, \dots, d_r-1\}$
  where $\{d_1-1, d_2-1, \dots, d_l-1\}$ are the exponents of $C_I$. For
  $i=1,2,\dots, l$, define $f_i=\sum_{\alpha\in \Phi^+} \alpha^{d_i}$. Even
  though $\{ f_1, \dots, f_l\}$ is not obviously algebraically independent,
  each $f_i$ is a non-zero element in $\BBC[V]^W_{d_i}$.
\item For $i=1,2, \dots, l$, express each $f_i$ as a polynomial in
  $\{\,\omega_s\mid s\notin I\,\} \cup \{\,\alpha_s\mid s\in
  I\,\}$. Then set $\alpha_s=0$ for $s$ in $I$ to get a polynomial
  $\rho(f_i)$ in $\BBC[X_I]^{C_I}_{d_i}$.
\item Compute the Jacobian determinant of $\{ \rho(f_1), \rho(f_2),
  \dots, \rho(f_l)\}$.
\end{enumerate}
It turns out that in all cases, the Jacobian determinant above is
non-zero and so it follows from \cite[Prop.~2.3]{springer:regular}
that $\BBC[X_I]^{C_I}= \BBC[ \rho(f_1), \rho(f_2), \dots, \rho(f_l)]$.
Therefore, $\rho$ is surjective. This completes the proof of Theorem
\ref{main}.



\bigskip {\bf Acknowledgments}: The authors acknowledge the financial
support of a DFG grant for the enhancement of bilateral cooperation
and the DFG-priority program SPP1388 ``Representation Theory''.  Parts
of this paper were written during a stay of the authors at the Isaac
Newton Institute for Mathematical Sciences in Cambridge during the
``Algebraic Lie Theory'' Programme in 2009, and during a visit of the
first author at the University of Bochum in 2010.

\bigskip


\bigskip

\bibliographystyle{amsalpha}

\newcommand{\etalchar}[1]{$^{#1}$}
\providecommand{\bysame}{\leavevmode\hbox to3em{\hrulefill}\thinspace}
\providecommand{\MR}{\relax\ifhmode\unskip\space\fi MR }
\providecommand{\MRhref}[2]{%
  \href{http://www.ams.org/mathscinet-getitem?mr=#1}{#2} }
\providecommand{\href}[2]{#2}


\end{document}